\newtheorem{lemma}{Lemma}
\newtheorem{theorem}{Theorem}
\newtheorem{definition}{Definition}
\newtheorem{remark}{Remark}
\providecommand{\keywords}[1]
{
  \small	
  \textbf{\textit{Keywords---}} #1
}
\title{Canonical form of modular hyperbolas with an application to integer factorization}
\author{Juan Di Mauro$^{1}$,$^{2}$\\
        \small $^{1}$CONICET \\
        \small $^{2}$ICC-University of Buenos Aires \\
        \small jdimauro@dc.uba.ar
}
\date{} 
\begin{document}

\maketitle

\begin{abstract}

For a composite $n$ and an odd $c$ with $c$ not dividing $n$, the number of solutions 
to the equation $n+a\equiv b\mod c$ with $a,b$ quadratic residues modulus $c$ 
is calculated. We establish a direct relation with those modular solutions and the distances between points of a modular hyperbola. Furthermore, for certain composite moduli $c$, an asymptotic formula for quotients between the number of solutions and $c$ is provided. Finally, an algorithm for integer factorization using such solutions is presented.
\end{abstract}
 \keywords{Modular Hyperbolas, Integer Factorization, Diophantine Equations}
\section{Introduction}
For $n\in\mathbb{Z}$ and $p$ an odd prime, the set of integers 
\[\mathcal{H}_{n,p}=\{(x,y)\mid xy\equiv n \mod p,\ 0\leq x,y<p\}\]
defines a modular hyperbola of $n$ modulus $p$. If $p$ and $n$ are coprimes, the set $\mathcal{H}_{n,p}$ has $\phi(p)=p-1$ points. 

The above definition can be naturally extended to composite $c$ and the set $\mathcal{H}_{n,c}$. Regarding $\mathcal{H}_{n,c}$ as a subset of the Euclidean plane, the set of distances between points in the hyperbola is
\[\mathcal{D}_{n,c}=\{|x-y|\mid x,y\in \mathcal{H}_{n,c}\}\]

It is therefore not surprising to see that the set of distances and the set of solutions to the equation 
\begin{equation}
\begin{array}{cc}
\label{eq:targets}
n+a\equiv b\mod c & a,b\in\mathbb{Z}_c^2\cup\{0\} 
\end{array}
\end{equation}
are related. The latter form, however, can be seen as more convenient for some applications, as shown in section \ref{sec:fac}. For the case of composite odd numbers, the number of solutions of (\ref{eq:targets}) can be precisely calculated. As explained in section \ref{sec:tauO}, the number of solutions to (\ref{eq:targets}) is small with respect to $c$ for certain composite $c$, and that very fact can be directly used to build an integer factoring algorithm.

In usual notation, $(\frac{n}{p})$ stands for the Legendre symbol of $n$ modulus a prime $p$. In \cite{sh:1} Theorem 1, the following result is provided, that will become handy later on.
\begin{theorem}
 Let $p$ be a odd prime and $gcd(n,p)=1$, then
 \[\mid \mathcal{D}_{n,p}\mid=\Bigg\{\begin{array}{cc}
  \frac{p-1}{4}+\added{(1+(\frac{n}{p}))/2}&\textnormal{ if }p\equiv 1\mod 4\\
  \frac{p-3}{4}+1&\textnormal{ if }p\equiv 3\mod 4
 \end{array}\]
 \label{teo:TD}
\end{theorem}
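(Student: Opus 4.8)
The plan is to count the distances $|x-y|$ directly by parametrizing $\mathcal{H}_{n,p}$ and analyzing when two distinct point-pairs yield the same distance. Since $\gcd(n,p)=1$, every $x\in\mathbb{Z}_p^{*}$ occurs exactly once as a first coordinate, with $y\equiv nx^{-1}$, so a point is determined by $x$ and the distance it contributes is the residue of $x-nx^{-1}$ reduced to $\{0,1,\dots,p-1\}$ and then folded to $\min(d,p-d)$ to get $|x-y|$ as an integer in $[0,(p-1)/2]$. First I would work instead with the quantity $t\equiv x - nx^{-1}\pmod p$, asking for which $t\in\mathbb{Z}_p$ the equation $x^2 - tx - n\equiv 0\pmod p$ is solvable; this is governed by the discriminant $t^2+4n$ being a square mod $p$. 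So the number of attainable values of $t$ equals the number of $t$ for which $\left(\frac{t^2+4n}{p}\right)\neq -1$, and I would evaluate $\sum_{t}\left(1+\left(\frac{t^2+4n}{p}\right)\right)/2$ using the standard character-sum evaluation $\sum_{t\in\mathbb{Z}_p}\left(\frac{t^2+\beta}{p}\right) = -1$ when $\gcd(\beta,p)=1$ (here $\beta=4n$), with a correction for the one or two values of $t$ making the discriminant zero.

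Next I would pass from values of $t$ to values of $|x-y|$. Note $x-y \equiv t$ and $y-x\equiv -t$, and the two points $(x,y)$ and $(y,x)$ are both in $\mathcal{H}_{n,p}$ (since $xy\equiv n$ is symmetric), so the set of signed differences $\{t \bmod p\}$ is symmetric under $t\mapsto -t$. Folding to absolute values therefore roughly halves the count, except that I must treat the fixed points of $t\mapsto -t$ separately: $t\equiv 0$, which occurs iff $-n$ is a QR (i.e. $x^2\equiv n$ has a solution, giving distance $0$), and, only when $p$ is odd in the relevant sense, there is no $t\equiv p/2$. I would also separately account for whether $t=0$ is actually hit — this is exactly where the $\left(1+\left(\frac{n}{p}\right)\right)/2$ term in the $p\equiv 1\bmod 4$ case comes from, since for $p\equiv 3\bmod 4$ one has $\left(\frac{-1}{p}\right)=-1$ and the parity works out so that the $0$-distance and the character-sum correction always contribute a clean $+1$.

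The bookkeeping splits on $p\bmod 4$ because $\left(\frac{-1}{p}\right)$ controls both (a) whether $t^2+4n\equiv 0$ has solutions (i.e. whether $-n$ is a QR, since $t^2+4n\equiv 0 \iff (t/2)^2 \equiv -n$) and (b) the way the symmetric pairing of nonzero $t$'s interacts with the total count's parity. I would carefully enumerate: total $t$-values hit $= \frac{1}{2}\big(p + S + Z\big)$ where $S=\sum_t\left(\frac{t^2+4n}{p}\right)=-1$ and $Z$ is a $\{0,1,2\}$-valued correction for zero-discriminant points being double-counted by the $\tfrac12(1+(\cdot))$ formula; then the number of distances is $\frac{(\#t\text{-values}) - [0\text{ is hit}]}{2} + [0\text{ is hit}]$. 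Substituting the two cases for $\left(\frac{-1}{p}\right)$ and simplifying should yield exactly the two stated expressions.

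The main obstacle I expect is the fixed-point/parity accounting when folding signed differences to absolute values: getting the additive constant exactly right (the $+\left(1+\left(\frac{n}{p}\right)\right)/2$ versus $+1$) requires care about whether $t=0$ is attained, whether the discriminant-zero values of $t$ are themselves fixed by negation, and how all of this interacts with the parity of $\frac{p-1}{2}$. The character-sum estimate itself is completely standard and exact (no error term), so it is not where the difficulty lies; the entire subtlety is in the case split on $p\bmod 4$ and the half-integer bookkeeping it forces. I would double-check the final constants against a small example such as $p=5,7,13$ with a couple of values of $n$ of each quadratic character.
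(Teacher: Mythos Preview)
Your approach is sound and will work. The key computation---counting $t\in\mathbb{Z}_p$ with $t^2+4n$ a square or zero via $\sum_t\chi(t^2+4n)=-1$, then folding under $t\mapsto -t$---gives exactly
\[
|\mathcal{D}_{n,p}|=\frac{p-1}{4}+\frac{1}{2}\Big[\Big(\tfrac{n}{p}\Big)=1\Big]+\frac{1}{2}\Big[\Big(\tfrac{-n}{p}\Big)=1\Big],
\]
which reduces to the two stated cases according to $\big(\tfrac{-1}{p}\big)$. One slip to fix: you write ``$t\equiv 0$, which occurs iff $-n$ is a QR'', but $t=0$ is attained iff $n$ (not $-n$) is a QR; the condition on $-n$ governs the zero-discriminant values $t^2+4n\equiv 0$, which is a separate correction. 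Keep those two roles straight and the bookkeeping goes through cleanly.

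As for comparison: the paper does not actually prove this theorem---it is quoted from Shparlinski--Winterhof. What the paper does instead is establish the identical count for the number of \emph{targets} $\tau(n,p)$ (Theorem~\ref{teo:tau}), via a different character-sum route: it shows $N(n+x^2=y^2)=p-1$ using the Jacobi sum $J(\chi,\chi)=-\chi(-1)$, and then divides out the multiplicity (two or four solutions per target) with a case split on whether $0$ appears as an $a$- or $b$-value. Your method is the more direct one for $\mathcal{D}_{n,p}$ itself, and in fact the paper sets it up implicitly when it observes just before Lemma~\ref{lemma:RD} that $A_u\neq\emptyset$ iff $\big(\tfrac{4n+u^2}{p}\big)\neq -1$; you are simply summing that criterion. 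The paper's detour through targets buys it the bijection $T_{n,p}\leftrightarrow\mathcal{D}_{n,p}$ as a byproduct, which it needs later; your route gets the cardinality faster but says nothing about targets.
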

The geometric intuition behind this Theorem is simple: in the square \replaced{$[0,p-1]^2$}{$[1,p-1]^2$} the points of the modular hyperbola are symmetric with respect to the lines given by $y=x$ and $y+x=p$. It is not too difficult to see that the elements of $\mathcal{H}_{n,p}$ lying on the symmetry axis $x=y$ are the square roots of $n$ modulus $p$, and those lying on the symmetry axis $y+x=p$ are the square roots of $-n$ modulus $p$. It therefore follows that such points exist if and only if $(\frac{n}{p})=1$ in the first case, and $(\frac{-n}{p})=1$ in the second.

For a given $u$, the subset of points of the hyperbola with distance $u$ is named $A_u$, that is
\[A_u=\{(x,y)\in \mathcal{H}_{n,p}\mid u=|x-y|\}.\]

As previously observed, if $A_u$ is not empty then it contains at least $2$ points. The Lemma below characterizes the set $A_u$.

\begin{lemma}
 Let $A_u$ be as above.
 \begin{enumerate}
  \item If $y$ is a root of $Y(Y-u)\equiv n\ \mod p$ then $(y-u,y)\in A_u$
  \label{lema:cond1}
  \item If $y$ is a root of $Y(Y+u)\equiv n\ \mod p$ then $(y+u,y)\in A_u$
  \label{lema:cond2}
  \item If $(x,y)\in A_u$ then $y$ is a root of the equations in \ref{lema:cond1}. or \ref{lema:cond2}.
 \end{enumerate}
\end{lemma}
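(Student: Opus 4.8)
The plan is to prove the three claims essentially by direct algebraic manipulation, translating the geometric condition $u = |x-y|$ into a quadratic congruence for one coordinate.

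\medskip

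\noindent\textbf{Setup and direction (1).} Suppose $y$ is a root of $Y(Y-u)\equiv n \mod p$. Put $x = y - u$. Then $xy = (y-u)y \equiv n \mod p$, so $(x,y) \in \mathcal{H}_{n,p}$ provided $x,y$ lie in the range $[0,p-1]$; since we are working in $\mathbb{Z}_p$ we take representatives in that range, so this is automatic. Moreover $|x - y| = |{-u}| = u$ (here one uses $0 \le u < p$, so $u$ is already the absolute distance). Hence $(y-u,y) \in A_u$. This is a one-line verification once the substitution is made.

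\medskip

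\noindent\textbf{Direction (2).} Symmetric: if $y$ is a root of $Y(Y+u)\equiv n \mod p$, set $x = y+u$, so $xy \equiv n \mod p$ and $|x-y| = u$, giving $(y+u,y)\in A_u$. Again essentially immediate.

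\medskip

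\noindent\textbf{Direction (3), the converse.} Take $(x,y)\in A_u$, so $xy\equiv n \mod p$ and $|x-y| = u$. The absolute value splits into two cases. If $x - y = -u$, i.e. $x = y - u$, then substituting into $xy\equiv n$ gives $(y-u)y\equiv n \mod p$, so $y$ is a root of the equation in item \ref{lema:cond1}. If instead $x - y = u$, i.e. $x = y + u$, then $(y+u)y\equiv n \mod p$ and $y$ is a root of the equation in item \ref{lema:cond2}. A small point to be careful about: $|x-y| = u$ as integers in $[0,p-1]$ means $x - y \in \{u, -u\}$ exactly (not merely $x - y \equiv \pm u \bmod p$), because both $x-y$ and $\pm u$ already lie in $(-p, p)$; this is what lets us pass cleanly between the Euclidean distance and the two linear relations. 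The edge case $u = 0$ collapses both equations to $Y^2 \equiv n$, consistent with the earlier remark that points on $x = y$ are square roots of $n$.

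\medskip

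\noindent\textbf{Main obstacle.} There is no serious analytic difficulty here; the one thing that genuinely requires attention is the interplay between the Euclidean absolute value $|x-y|$ used to define $\mathcal{D}_{n,p}$ and $A_u$, and congruence arithmetic mod $p$ — specifically justifying that $x - y = \pm u$ holds on the nose rather than only modulo $p$, and confirming that the representatives chosen keep everything inside $[0,p-1]$. Once that bookkeeping is pinned down, all three parts are routine substitutions.
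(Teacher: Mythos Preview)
Your proof is correct and is exactly the kind of direct algebraic verification the paper has in mind; the paper itself simply writes ``The calculations are elementary and therefore omitted.'' Your attention to the distinction between $x-y=\pm u$ as integers versus merely modulo $p$ is the only point requiring care, and you handle it appropriately.
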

\begin{proof}
The calculations are elementary and therefore omitted. 
\end{proof}

\replaced{It is immediately clear that $|A_u|=4$ provided that $(\frac{4n+u^2}{p})=1$. In case $4n+u^2\equiv 0 \mod p$, we have $|A_u|=2$ and finally $A_u=\emptyset$ if $(\frac{4n+u^2}{p})=-1$.}{It is immediately clear that $|A_u|=4$ provided that $4n+u^2\not\equiv 0 \mod p$ and $u\neq 0$, and otherwise $|A_u|=2$.}

When dividing the square \replaced{$[0,p-1]^2$}{$[1,p-1]^2$} into four regions $R_1,\ldots,R_4$, according to the symmetry lines $y=x$ and $x+y=p$ not two points of $A_u$ belong to the same region, and that means there exists a correspondence between $u$ and the points in one region. For simplicity, lets consider only the region 
\[R_1=\{(x,y)\in\mathcal{H}_{n,p}\mid \ 0\leq x<p, 0\leq y < \min(x,p-x) \}\]
and
\[\bar{R}_1=\{ (x,y)\in\mathcal{H}_{n,p} \mid\ 0\leq x<p, 0\leq y \leq \min(x,p-x) \}\]
\begin{lemma}
\label{lemma:RD}
For $p$ an odd prime, let $n$ be an integer with $gcd(n,p)=1$, \deleted{$(\frac{n}{p})=-1$ and $(\frac{-n}{p})=-1$}. Then, there is a bijective correspondence between the points of \replaced{$\bar{R}_1$}{$R_1$} and $\mathcal{D}_{n,p}$.
\end{lemma}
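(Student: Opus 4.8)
The plan is to exhibit the bijection explicitly. Let $\Phi\colon\bar{R}_1\to\mathcal{D}_{n,p}$ be the map $\Phi(x,y)=x-y$. It is well defined: on $\bar{R}_1$ one has $y\le x$, so $x-y=|x-y|$, and since $(x,y)\in\mathcal{H}_{n,p}$ this value lies in $\mathcal{D}_{n,p}$ by definition. Throughout I will use the two symmetries of the hyperbola already recalled above: $(x,y)\in\mathcal{H}_{n,p}$ if and only if $(y,x)\in\mathcal{H}_{n,p}$, and if and only if $(p-x,p-y)\in\mathcal{H}_{n,p}$ (the latter because $(p-x)(p-y)\equiv xy\bmod p$). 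Both maps preserve $|x-y|$, and since $\gcd(n,p)=1$ every point of $\mathcal{H}_{n,p}$ satisfies $1\le x,y\le p-1$, so the reflection $(x,y)\mapsto(p-x,p-y)$ causes no wrap-around.

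Surjectivity is the easy half. Given $u\in\mathcal{D}_{n,p}$, choose $(a,b)\in\mathcal{H}_{n,p}$ with $|a-b|=u$. Depending on the signs of $a-b$ and $a+b-p$, one of the four points $(a,b)$, $(b,a)$, $(p-a,p-b)$, $(p-b,p-a)$ satisfies both $y\le x$ and $x+y\le p$, hence lies in $\bar{R}_1$; as the distance is preserved, $\Phi$ sends it to $u$. This is a short four-case check; alternatively one may invoke the characterization of $A_u$ given above together with the already-noted fact that no two points of $A_u$ share a region.

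Injectivity is the crux. Suppose $(x_1,y_1),(x_2,y_2)\in\bar{R}_1$ with $x_1-y_1=x_2-y_2=u\ge 0$. If $u=0$, then $x_i=y_i$ with $x_i^2\equiv n\bmod p$, so $x_1,x_2\in\{r,p-r\}$ where $r,p-r$ are the two square roots of $n$; but membership in $\bar{R}_1$ forces $2x_i\le p$, i.e. $x_i\le(p-1)/2$, and exactly one of $r,p-r$ meets this, so $x_1=x_2$. If $u>0$, then $x_i=y_i+u$ and $x_iy_i\equiv n$ gives $y_i^2+uy_i-n\equiv 0\bmod p$, a quadratic with at most two roots modulo $p$. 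If $y_1\ne y_2$, they are its two distinct roots, so $y_1+y_2\equiv -u\bmod p$; since $\gcd(n,p)=1$ neither root can be $0$, hence as integers $3\le y_1+y_2\le 2p-3$, which forces $y_1+y_2\in\{p-u,\,2p-u\}$. On the other hand membership in $\bar{R}_1$ gives $2y_i+u=x_i+y_i\le p$, i.e. $y_i\le(p-u)/2$, so $y_1+y_2\le p-u$; this rules out $2p-u$, and it is consistent with $y_1+y_2=p-u$ only if $y_1=y_2$, a contradiction. Hence $y_1=y_2$, so $x_1=x_2$. (If the quadratic has a repeated root or no root, the conclusion is immediate or vacuous.)

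The main obstacle is the bookkeeping inside the injectivity step: one has to make sure that the geometric condition ``$(x,y)\in\bar{R}_1$'' is transported to exactly the right arithmetic condition on a root of $Y^2+uY-n$. The delicate point is the wrap-around --- a root $\beta\in\{0,\dots,p-1\}$ with $\beta+u\ge p$ does not yield a hyperbola point at distance $u$ at all (its first coordinate is $\beta+u-p$ and its distance is $p-u$) --- which is exactly why the constraint $x+y\le p$, rather than merely $y\le x$, is needed to pin down a unique representative in $\bar{R}_1$; the parity of $u$ (with $p$ odd, $(p-u)/2$ is an integer only for odd $u$) and the degenerate cases $u=0$ and $u^2+4n\equiv 0\bmod p$ also have to be tracked. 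Once this is settled, comparing $|\bar{R}_1|$ against Theorem~\ref{teo:TD} provides a consistency check.
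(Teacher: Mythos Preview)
Your proof is correct and follows essentially the same strategy as the paper: define the natural map $(x,y)\mapsto |x-y|$ from $\bar R_1$ to $\mathcal D_{n,p}$, obtain surjectivity by applying the two symmetries $(x,y)\mapsto(y,x)$ and $(x,y)\mapsto(p-x,p-y)$ to land in $\bar R_1$, and then prove injectivity. The only noteworthy difference is in how injectivity is handled for $u>0$: the paper enumerates the four-element orbit $A_u=\{(x_1,y_1),(y_1,x_1),(p-x_1,p-y_1),(p-y_1,p-x_1)\}$ and appeals to the earlier observation that at most one of these lies in each region, whereas you argue algebraically via Vieta on the quadratic $Y^2+uY-n\equiv 0\pmod p$, using $y_1+y_2\equiv -u$ together with the bound $y_i\le(p-u)/2$ coming from $x_i+y_i\le p$. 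Your route is slightly more self-contained, since it does not rely on the informal ``no two points of $A_u$ share a region'' claim stated before the lemma; the paper's route is a bit quicker once that claim is accepted. The final expository paragraph about wrap-around and degenerate cases is accurate commentary but not needed for the proof itself.
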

\begin{proof}

\deleted{For a given pair $(x,y)\in\mathcal{H}_{n,p}$, let $u=|x-y|\in\mathcal{D}_{n,p}$ and define $A$ as the set that contains $(x,y)$ and all points that are symmetric to $(x,y)$}

\deleted{Clearly, $A\subset A_u$ hence all the elements in $A$ are different, as implied by $gcd(n,p)=1$, $(\frac{n}{p})=-1$ and $(\frac{-n}{p})=-1$, so $A=A_u$.}

\added{Let $\bar{\psi}:\bar{R}_1\rightarrow \mathcal{D}_{n,p}$ be the application defined by $\bar{\psi}(x,y)=|x-y|$.}

\added{If $u\in \mathcal{D}_{n,p}$ then there is a pair $(x,y)\in\mathcal{H}_{n,p}$ with $|x-y|=u$. If $(x,y)\in \bar{R}_1$ then $\bar{\psi}(x,y)=u$. On the other hand, if $(x,y)\not\in \bar{R}_1$ then $f_1$, $f_2$ or a composition of both carries $(x,y)$ to a point $(x',y')\in \bar{R}_1$ with $|x-y|=|x'-y'|=u$. That proves the surjectivity}

\added{Note that $\bar{\psi}$ restricted to $R_1$ is injective because, if $(x_1,y_1), (x_2,y_2)\in R_1$ with $\bar{\psi}(x_1,y_1)=|x_1-y_1|=\bar{\psi}(x_2,y_2)=|x_2-y_2|=u$ then 
\[A_u=\{(x_1,y_1),(y_1,x_1),(p-x_1,p-y_1),(p-y_1,p-x_1)\}\]
and $(x_1,y_1), (x_2,y_2) \in A_u $. But $A_u$ has only one element of $R_1$, so it must be $(x_1,y_1)=(x_2,y_2)$.}

\added{It only remains to consider the injectivity in the points of $\bar{R_1}$ that are in the borders.}

\added{Lets suppose $\bar{\psi}(x_1,y_1)=\bar{\psi}(x_2,y_2)$. }

\added{If for $(x_1,y_1)\in\bar{R}_1$ we have $x_1=y_1$ then $u=|x_1-y_1|=|x_2-y_2|=0$. Exactly one element of the set $A_0=\{(x_1,x_1),(p-x_1,p-x_1)\}$ is in $\bar{R_1}$, so  $(x_1,y_1)=(x_2,y_2)=(x_1,x_1)$}

\added{Otherwise if $(x_1,y_1)\in\bar{R}_1$ satisfies $y_1=p-x_1$, then in that case for $u=|x_1-(p-x_1)|=|2x_1-p|$ it holds $A_u=\{(x_1,p-x_1),(p-x_1,x_1)\}$ and exactly one point of $A_u$ is in $\bar{R_1}$, so $(x_1,y_1)=(x_1,p-x_1)=(x_2,y_2)$.}

\end{proof}
\deleted{If $u=0$ and thus $(\frac{n}{p})=1$ then $A_u$ has only two points, $(x,x)$ and $(p-x,p-x)$, with $x$ the square root of $n$ modulus $p$ such that $x<p/2$. In that case, $(x,x)$ is the only element of $A_u$ in $\bar{R}_1$.}

\deleted{If $(\frac{-n}{p})=1$ and $y$ is the square root of $-n$ modulus $p$ with $y>p/2$, then for $u=p-2y$ and we have that $A_u=\{(y,p-y),(p-y,y)\}$, so $(y,p-y)$ is the only point of $A_u$ in $\bar{R}_1$.}

\deleted{The previous observations and the preceding Lemma shows that if $gcd(n,p)=1$ there is a one-to-one correspondence between the points of $\mathcal{D}_{n,p}$ and $\bar{R}_1$.}
 
\section{The canonical form of a modular hyperbola}
The motivation for the following definition can be found in the Fermat method to factor a composite $n$. In such case the factors of $n$ can be obtained by looking at the solutions of the Diophantine equation
\begin{equation}
n+x^2=y^2 .
\label{eq:tar:fermat}
\end{equation}

If (\ref{eq:tar:fermat}) is reduced modulus $c$ for some $c$ of modest size, the modular solutions of the equation can be obtained without much effort. It is enough (and naive) to try with half of the elements $\bar{x}\in \mathbb{Z}_c$ and keep the pairs $(\bar{x}^2,n+\bar{x}^2\mod c)$ for every $\bar{x}$ that satisfy $n+\bar{x}^2\mod c\in(\mathbb{Z}_c)^2\deleted{\cup \{0\}}$. Furthermore, if $x^*,y^*$ satisfies (\ref{eq:tar:fermat}) then clearly $x^*$ and $y^*$ must be equivalent modulus $c$ to one pair of solutions modulo $c$. This very fact motivates the following definition, which is due to Scolnik \cite{hugo}.

\begin{definition}
A \emph{target} for $n$ is a triplet $(a,b,c)$ of non-negative integers with $a,b\in(\mathbb{Z}_c)^2\deleted{\cup \{0\}}$ holding $n+a\equiv b\mod c$
 \label{def:target}
\end{definition}

It will also be the case that $a,b$ is a target for $n$ modulus $c$, meaning that $(a,b,c)$ is a target for $n$. In addition, if $(a,b,c)$ is a target for $n$ and $a\equiv {x^*}^2\mod c$, then for some $0\leq\alpha\leq c-1$ with $\alpha^2\equiv a\mod c$ there is an integer $z$ such that
 \begin{equation}
 {x^*}^2=(\alpha+cz)^2
 \label{eq:parab:x}
\end{equation}

Let $T(n,c)$ denote the set of targets for $n$ modulus $c$, that is
\[T(n,c)=\{(a,b,c)\mid a,b\in(\mathbb{Z}_c)^2\deleted{\cup \{0\}}\textnormal{ and } n+a\equiv b\mod c\} \]
and let $\tau(n,c)$ be the number of elements in $T(n,c)$. For an odd prime $p$ that does not divide $n$, it is expected that $\tau(n,p)$ and $\mid \mathcal{D}_{n,p}\mid$ are equal.

Although the proof can be stated by elementary means, it is more convenient to use a special case of a Jacobi sum. To do so, we write $N(x^2= a)$ as the number of solutions of $x^2\equiv a\mod p$ with $0\leq a\leq p-1$. Unless stated otherwise, from now on all elements should be in $\mathbb{Z}_p$ hence the solutions to $x^2\equiv a\mod p$ should be understood as the elements of $\mathbb{Z}_p$ that satisfy the former congruence.

The Lemma below is rather intuitive if one considers the canonical form of an hyperbola in the Euclidean space and the definition of $\mathcal{H}_{n,p}$.
\begin{lemma}
\label{lemma:nsol}
For an integer $n$ and an odd prime $p$ with $p\nmid n$, the number of solutions to $n+x^2\equiv y^2\mod p$ is $p-1$.
\end{lemma}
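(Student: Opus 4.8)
The plan is to use precisely the ``canonical form'' the statement alludes to. Over $\mathbb{Z}_p$ the equation $n+x^2\equiv y^2$ reads $y^2-x^2\equiv n$, and $y^2-x^2=(y-x)(y+x)$. Since $p$ is odd, $2$ is a unit mod $p$, so the linear substitution $(x,y)\mapsto(u,v):=(y+x,\,y-x)$ is a bijection of $\mathbb{Z}_p^2$ onto itself, with inverse $x=2^{-1}(u-v)$, $y=2^{-1}(u+v)$. It carries the solution set of $y^2-x^2\equiv n$ onto the solution set of $uv\equiv n\pmod p$. As $p\nmid n$, the residue $n$ is a unit, so $u=0$ gives no solution while each of the $p-1$ units $u$ determines exactly one $v\equiv nu^{-1}$; hence there are exactly $p-1$ solutions. (This is the same count as $|\mathcal{H}_{n,p}|=\phi(p)=p-1$ recalled in the introduction, which is no accident.) Concretely I would (i) verify the substitution is a bijection, (ii) verify it transforms one equation into the other, and (iii) count the solutions of $uv\equiv n$.

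A second route, which reuses the notation $N(x^2=a)$ just introduced and rehearses the character-sum manipulations needed in the sequel, is to stratify by the value of $x^2$. The number of solutions equals
\[
\sum_{a\in\mathbb{Z}_p}N(x^2=a)\,N(y^2=n+a)=\sum_{a\in\mathbb{Z}_p}\Big(1+\big(\tfrac{a}{p}\big)\Big)\Big(1+\big(\tfrac{n+a}{p}\big)\Big),
\]
using $N(z^2=m)=1+\big(\tfrac{m}{p}\big)$ with the convention $\big(\tfrac{0}{p}\big)=0$. Expanding, the right-hand side is $p$ (from $\sum_a 1$), plus the two complete Legendre-symbol sums $\sum_a\big(\tfrac{a}{p}\big)$ and $\sum_a\big(\tfrac{n+a}{p}\big)$ over $\mathbb{Z}_p$, which both vanish, plus the cross term $\sum_{a}\big(\tfrac{a^2+na}{p}\big)$. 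So the lemma reduces to showing this last sum equals $-1$.

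That evaluation is the only non-routine ingredient. I would complete the square (legitimate since $p$ is odd): $a^2+na=(a+2^{-1}n)^2-(2^{-1}n)^2$, so the substitution $t=a+2^{-1}n$ turns the sum into $\sum_{t}\big(\tfrac{t^2-k}{p}\big)$ with $k=(2^{-1}n)^2\not\equiv0\pmod p$ --- and it is exactly here that the hypothesis $p\nmid n$ enters. This is the classical character sum of a quadratic polynomial of nonzero discriminant, which equals $-\big(\tfrac{1}{p}\big)=-1$; one may cite it, or prove it in a couple of lines by pulling $\big(\tfrac{t^2}{p}\big)=1$ out of the terms with $t\neq0$ and counting how often each value of $1-kt^{-2}$ is attained. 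Either way the four contributions sum to $p+0+0-1=p-1$. I expect the first, bijective argument to be the one the author intends, with the second included mainly because it previews the technique used afterwards.
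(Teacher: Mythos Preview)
Both of your approaches are correct. Contrary to your prediction, the paper takes your second route rather than the bijective one: it writes the count as $\sum_{n+a\equiv b}(1+(\frac{a}{p}))(1+(\frac{b}{p}))$, expands, and notes the middle two sums vanish. The only divergence is in handling the cross term. Where you complete the square in $\sum_a(\frac{a^2+na}{p})$ and appeal to the standard evaluation of the Legendre symbol summed over a quadratic of nonzero discriminant, the paper instead substitutes $a=-na'$, $b=nb'$ to recognise $\sum_{n+a\equiv b}(\frac{a}{p})(\frac{b}{p})$ as $(-1)^{(p-1)/2}J(\chi,\chi)$ for $\chi$ the Legendre symbol, and then invokes the identity $J(\chi,\chi^{-1})=-\chi(-1)$ from Ireland--Rosen. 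Your first argument via $(x,y)\mapsto(y+x,\,y-x)$ is cleaner and more elementary---it is literally the passage between the ``canonical form'' and $\mathcal{H}_{n,p}$, so the count $p-1$ is just $|\mathcal{H}_{n,p}|=\phi(p)$ from the introduction---and the paper itself concedes as much, remarking just before the lemma that ``the proof can be stated by elementary means'' but opting for the Jacobi-sum formulation. That choice is a matter of taste; nothing in the subsequent theorems actually reuses the Jacobi-sum machinery.
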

\begin{proof}
 The number of solutions to $n+x^2\equiv y^2\mod p$ can be written as
 \[N(n+x^2= y^2)=\sum_{n+a\equiv b\mod p}N(x^2= a)N(y^2= b)\]
 It is easy to see that $N(x^2= a)=1+\big(\frac{a}{p}\big)$, so the previous equation becomes
 \[\sum_{n+a\equiv b\mod p}\Big(1+\Big(\frac{a}{p}\Big)\Big)\Big(1+\Big(\frac{b}{p}\Big)\Big).\]
or
\[\sum_{a\in\mathbb{Z}_p}1+\sum_{a\in\mathbb{Z}_p}\Big(\frac{a}{p}\Big)+\sum_{b\in\mathbb{Z}_p}\Big(\frac{b}{p}\Big)+\sum_{n+a\equiv b\mod p}   \Big(\frac{a}{p}\Big)\Big(\frac{b}{p}\Big).\]

It is well known that for a non-trivial character $\chi$ over a field $\mathbb{F}_p$ the sum $\sum_{a\in\mathbb{F}_p^*}\chi(a)$ is $0$. As the Legendre symbol is a character over $\mathbb{Z}_p^*$, the second and third summations are $0$. 

On the other hand, if we rename $a=-na'$ and $b=nb'$ we get
\begin{align*}
 \sum_{n+a\equiv b}\Big(\frac{a}{p}\Big)\Big(\frac{b}{p}\Big)&=\sum_{a'+b'\equiv 1}\Big(\frac{-na'}{p}\Big)\Big(\frac{nb'}{p}\Big)\\
 &=\Big(\frac{-n}{p}\Big)\Big(\frac{n}{p}\Big)\sum_{a'+b'\equiv 1}\Big(\frac{a'}{p}\Big)\Big(\frac{b'}{p}\Big)\\
 &=(-1)^{\frac{p-1}{2}}\sum_{a'+b'\equiv 1}\Big(\frac{a'}{p}\Big)\Big(\frac{b'}{p}\Big)
\end{align*}
(the congruence is modulus $p$). This is a Jacobi sum $J(\chi,\chi)$ with the Legendre symbol as the character $\chi$. Furthermore, as the Legendre symbol is a character of order $2$, meaning that $J(\chi,\chi)=J(\chi,\chi^{-1})$, the special case 
\[J(\chi,\chi^{-1})=-(-1)^{\frac{p-1}{2}}\] holds (see Theorem 1 in chapter 8, section 3 of \cite{ireland}). Finally,
\[N(n+x^2= y^2)=p+(-1)^{\frac{p-1}{2}}(-(-1)^{\frac{p-1}{2}})=p-1\]

\end{proof}

\begin{theorem}
Let $p$ be an odd prime with $gcd(p,n)=1$, then
 \[\tau(n,p)=\Bigg\{\begin{array}{cc}
  \frac{p-1}{4}+\replaced{(1+(\frac{n}{p}))/2}{(\frac{n}{p})}&\textnormal{ if }p\equiv 1\mod 4\\
  \frac{p-3}{4}+1&\textnormal{ if }p\equiv 3\mod 4
 \end{array}\]
 hence $\tau(n,p)=\mid \mathcal{D}_{n,p}\mid$.
 \label{teo:tau}
\end{theorem}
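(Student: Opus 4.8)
The plan is to count the solutions of $n+x^2\equiv y^2\pmod p$ a second time, grouping them by the value of the pair $(x^2\bmod p,\ y^2\bmod p)$, and then to compare with the count $p-1$ obtained in Lemma~\ref{lemma:nsol}. This ties the arithmetic quantity $\tau(n,p)$ directly to the geometric/analytic count already in hand.

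First I would introduce the map $\Phi$ sending a solution $(x,y)\in\mathbb{Z}_p^2$ of $n+x^2\equiv y^2$ to the triplet $(x^2\bmod p,\ y^2\bmod p,\ p)$. Since $0=0^2\in(\mathbb{Z}_p)^2$, this triplet is always a target for $n$ in the sense of Definition~\ref{def:target}, and conversely any target $(a,b,p)$ with $a=x_0^2$, $b=y_0^2$ is the image of the solution $(x_0,y_0)$ (which is a solution precisely because $n+a\equiv b$); hence $\Phi$ surjects onto $T(n,p)$. The crucial step is the fibre analysis: over a target $(a,b,p)$ with $a\not\equiv 0$ and $b\not\equiv 0$ the fibre is $\{(\pm x,\pm y)\}$ and so has exactly $4$ elements, while over a target with exactly one of $a,b$ congruent to $0$ the fibre has exactly $2$ elements, and no target has $a\equiv b\equiv 0$ because $p\nmid n$.

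Next I would count the degenerate targets, those with $a\equiv 0$ or $b\equiv 0$: the target $(0,\ n\bmod p,\ p)$ exists iff $n$ is a quadratic residue and $((-n)\bmod p,\ 0,\ p)$ exists iff $-n$ is, so their number is $\tau_0=\tfrac{1+(\frac{n}{p})}{2}+\tfrac{1+(\frac{-n}{p})}{2}$. Combining the fibre count with Lemma~\ref{lemma:nsol} gives $p-1=4\bigl(\tau(n,p)-\tau_0\bigr)+2\tau_0$, that is $\tau(n,p)=\tfrac14\bigl(p-1+2\tau_0\bigr)$.

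Finally I would substitute $(\frac{-n}{p})=(-1)^{(p-1)/2}(\frac{n}{p})$ and separate the cases $p\equiv 1$ and $p\equiv 3\pmod 4$: in the former, $\tau_0=1+(\frac{n}{p})$ and the formula collapses to $\tfrac{p-1}{4}+\tfrac{1+(\frac{n}{p})}{2}$; in the latter, $\tau_0=1$ and it becomes $\tfrac{p-3}{4}+1$. In particular the right-hand side is an integer in both cases, as it must be, and comparing with Theorem~\ref{teo:TD} immediately gives $\tau(n,p)=|\mathcal{D}_{n,p}|$. I expect the only genuinely delicate point to be the fibre analysis at the degenerate targets — one must make sure the solutions with $x\equiv 0$ or $y\equiv 0$ are weighted by $2$ rather than $4$, and that these are precisely the solutions responsible for the correction terms $\tfrac{1+(\frac{n}{p})}{2}$ and the extra $+1$; the rest is bookkeeping.
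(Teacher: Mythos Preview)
Your proof is correct and follows essentially the same approach as the paper: both count the $p-1$ solutions of $n+x^2\equiv y^2\pmod p$ from Lemma~\ref{lemma:nsol} by grouping them according to the target $(x^2,y^2,p)$, observing that the fibre has size $4$ unless $a$ or $b$ vanishes (in which case it has size $2$), and then solving for $\tau(n,p)$. The only difference is organizational---you package the degenerate-target count into a single quantity $\tau_0$ and derive one formula before splitting on $p\bmod 4$, whereas the paper treats the three sub-cases separately from the start.
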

\begin{proof}
The proof goes by stages. If $p\equiv 1\mod 4$ and $\Big(\frac{n}{p}\Big)=-1$ then there is no target $(a,b,p)$ with $a=0$ or $b=0$, so every target $(a,b,p)$ gives four solutions to the equation $n+x^2\equiv y^2\mod p$. That is,
\[4\tau(n,p)=p-1.\]
by Lemma \ref{lemma:nsol}. \added{Hence, 
\[\tau(n,p)=\frac{p-1}{4}.\]}

If $p\equiv 1\mod 4$ and $\Big(\frac{n}{p}\Big)=1$ then also $\Big(\frac{-n}{p}\Big)=1$, there is one target of the form $(0,b,p)$ and one target of the form $(a,0,p)$ with $a\neq 0$ and $b\neq 0$, each one giving two solutions to $n+x^2\equiv y^2\mod p$ and the rest providing four solutions. It therefore means that
\[4(\tau(n,p)-2)+4=p-1.\]
\added{So
\[\tau(n,p)=\frac{p-1}{4}+1.\]}
Finally, if $p\equiv 3\mod 4$ exactly one of the pairs $\{n,-n\}$ is a quadratic residue modulus $p$, meaning there is only one target $(a,b,p)$ of $n$ with $a=0$ or $b=0$. That target gives two solutions to $n+x^2\equiv y^2\mod p$, and each one of the rest gives four. So,
\[4(\tau(n,p)-1)+2=p-1\]
\added{and from that
\[\tau(n,p)=\frac{p-3}{4}+1\]
}
\end{proof}
\begin{remark}
\label{rem:taumulti}
If $s$ and $t$ are coprime, then $\tau(n,st)=\tau(n,t)\tau(n,s)$ following from the Chinese Remainder Theorem, so $\tau(n,c)$ is multiplicative as a function of $c$.
\end{remark}
Furthermore, $\tau$ can be calculated for moduli that are powers of odd primes.
\begin{theorem}
\label{teo:tau:pk}
Let $p$ be an odd prime, $n$ an integer with $gcd(p,n)=1$ and $k>0$.
 \begin{enumerate}
  \item If $(a,b,p^k)$ is a target for $n$ with $a\neq 0$ and $b\neq 0$, then there are $p$ targets $(a',b',p^{k+1})$ of $n$ with $a\equiv a'\mod p^k$ and $b\equiv b'\mod p^k$.
  \item If $(0,b,p^k)$ is a target for $n$, the number of targets $(a',b',p^{k+1})$ for $n$ with $0\equiv a'\mod p^k$ and $b\equiv b'\mod p^k$ is
  \begin{enumerate}
   \item  $|(\mathbb{Z}_p)^2|\deleted{+1}$ for $k$ even.
   \item $1$ for $k$ odd.
  \end{enumerate}
   \item If $(a,0,p^k)$ is a target for $n$, the number of targets $(a',b',p^{k+1})$ for $n$ with $a\equiv a'\mod p^k$ and $0\equiv b'\mod p^k$ is
  \begin{enumerate}
   \item  $|(\mathbb{Z}_p)^2|\deleted{+1}$ for $k$ even.
   \item $1$ for $k$ odd.
  \end{enumerate}
  \end{enumerate}
\end{theorem}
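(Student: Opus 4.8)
The plan is to understand, once and for all, how a single target modulo $p^k$ lifts one level up the $p$-adic tower, and then to reduce each of the three items to counting which residues of a prescribed shape become quadratic residues modulo $p^{k+1}$. Fix a target $(a,b,p^k)$. Any target $(a',b',p^{k+1})$ with $a'\equiv a\pmod{p^k}$ satisfies $a'=a+tp^k$ for a unique $t\in\{0,\dots,p-1\}$, and then $b'=(n+a')\bmod p^{k+1}$ is forced; note that $b'\equiv n+a\equiv b\pmod{p^k}$ automatically, and as $t$ ranges over $\{0,\dots,p-1\}$ the residue $b'$ ranges over all $p$ lifts of $b$ modulo $p^k$. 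Hence the number of targets lifting $(a,b,p^k)$ with the prescribed reductions is exactly the number of $t\in\{0,\dots,p-1\}$ for which both $a+tp^k$ and the corresponding $b'$ are quadratic residues modulo $p^{k+1}$.

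The crux is the following lifting lemma: for $p$ odd and any \emph{nonzero} quadratic residue $r$ modulo $p^k$, \emph{all} $p$ lifts $r+tp^k$ $(0\le t\le p-1)$ are quadratic residues modulo $p^{k+1}$. I would prove it by writing $r=p^vu$ with $u$ a unit and $0\le v\le k-1$: a valuation argument shows that $x^2\equiv r\pmod{p^k}$ forces $v_p(x^2)=v$, so $v$ is even, and a standard Hensel-lifting argument (here is where $p$ odd is used) shows $u$ is then a square modulo $p$. Since $r+tp^k=p^v(u+tp^{k-v})$ with $k-v\ge1$, the factor $u+tp^{k-v}$ is a unit congruent to $u$ modulo $p$, hence a square modulo $p$, hence a square modulo $p^{k+1-v}$ by Hensel; as $v$ is even, $r+tp^k$ is therefore a square modulo $p^{k+1}$. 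Item (1) is now immediate: if $a\ne0$ and $b\ne0$, both are nonzero quadratic residues modulo $p^k$, so all $p$ lifts $a'=a+tp^k$ are quadratic residues, and the corresponding values $b'$ run over the $p$ lifts of the nonzero quadratic residue $b$ and are quadratic residues as well; all $p$ pairs are targets.

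For items (2) and (3) one coordinate is automatically harmless while the other carries all the constraint. In (2), $b=n\bmod p^k$ is a unit quadratic residue, so by the lemma every lift $b'=(n+tp^k)\bmod p^{k+1}$ is a quadratic residue, and the surviving constraint is that $a'=tp^k$ be a quadratic residue modulo $p^{k+1}$; in (3), $a=-n\bmod p^k$ is a unit quadratic residue, so by the lemma every lift $a'$ is a quadratic residue, and the surviving constraint is that $b'$, which now ranges over the lifts $tp^k$ of $0$, be a quadratic residue modulo $p^{k+1}$. In both cases the count equals the number of $t\in\{0,\dots,p-1\}$ such that $tp^k$ is a quadratic residue modulo $p^{k+1}$. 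Now $t=0$ always contributes, since $0$ is a square; and for $t\ne0$ we have $v_p(tp^k)=k$, so a square root would need valuation $k/2$, whence $tp^k$ can be a quadratic residue modulo $p^{k+1}$ only when $k$ is even, and in that case $tp^k=p^k\cdot t$ is a square modulo $p^{k+1}$ precisely when $t$ is a square modulo $p$. Hence the count is $1$ when $k$ is odd and $1+\tfrac{p-1}{2}=|(\mathbb{Z}_p)^2|$ when $k$ is even, as asserted; incidentally this also shows $(0,n\bmod p^{k+1},p^{k+1})$ is always a target, as it must be.

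I expect the only genuinely delicate point to be the lifting lemma — specifically, handling the non-unit case $p\mid r$ and the boundary values of $v_p(r)$ near $k$, and pinning down that ``$r$ is a nonzero quadratic residue modulo $p^k$'' is equivalent to ``$v_p(r)$ is even and its unit part is a square modulo $p$'', a condition preserved by every lift. Everything downstream of that lemma is bookkeeping: identifying the forced coordinate in each item and running the one short valuation argument for $tp^k$.
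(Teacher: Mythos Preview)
Your argument is correct and follows the same route as the paper: identify which lifts $r+tp^k$ of a quadratic residue $r$ modulo $p^k$ remain quadratic residues modulo $p^{k+1}$, then count. Your lifting lemma is in fact stated and proved more carefully than the paper's---the paper only asserts the result for residues $g\not\equiv 0\pmod p$, whereas your valuation argument correctly covers the case $p\mid a$ (or $p\mid b$) with $a,b\neq 0$, which can genuinely occur for $k\ge 3$.
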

\begin{proof}
 \begin{enumerate}
  \item If $g\not\equiv 0\mod p$ is a quadratic residue modulus $p^k$, then 
  $g+tp^k$ is a quadratic residue modulus $p^{k+1}$ for $0\leq t \leq p$. The set 
  \[\{(a+tp^k,b+t'p^k,p^{k+1})\mid t'=(n+a-b)/p^k+t,\ 0\leq t < 0\}\]
  is a set of targets for $n$.
  \item If $k$ is even, $tp^k$ is a quadratic residue modulus $p^{k+1}$ provided 
  $t\in(\mathbb{Z}_p^2)\deleted{\cup \{0\}}$. Besides, $b+t'p^k$ is quadratic residue of $p^{k+1}$ for $0\leq t'<p$. The set
  \[\{(tp^k,b+t'p^k,p^{k+1})\mid t'=(n-b)/p^k+t,\ 0\leq t < 0\}\]
  is a set of targets for $n$.
  
  If $k$ is odd, $tp^k$ is a quadratic residue of $p^{k+1}$ only for $t=0$.
  \item Analogous to the previous case.
 \end{enumerate}

\end{proof}
For brevity, let $s_p(n)=(1+(\frac{n}{p}))/2+(1+(\frac{-n}{p}))/2$. 
\begin{theorem}
\label{teo:tau:pk:bis}
 If $p$ is an odd prime and $p\nmid n$, then for $k>1$
 \[\tau(n,p^{k+1})=\Bigg\{\begin{array}{cc}
  [\tau(n,p^{k})-s_p(n)]p+s_p(\deleted{-}n)(|(\mathbb{Z}_p)^2|+1) &\textnormal{ if }k\textnormal{ is even }\\
  
  [\tau(n,p^{k})-s_p(\deleted{-}n)]p+s_p(n)&\textnormal{ if }k \textnormal{ is odd}
 \end{array}\]
 \label{teo:taupotencia}
\end{theorem}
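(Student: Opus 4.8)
The plan is to reduce the statement to the lifting dichotomy of Theorem~\ref{teo:tau:pk} by sorting the targets modulo $p^k$ according to whether they are degenerate.

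First I would split $T(n,p^k)$ into \emph{regular} targets, those $(a,b,p^k)$ with $a\neq 0$ and $b\neq 0$, and \emph{degenerate} ones, with $a=0$ or $b=0$. A degenerate target with $a=0$ forces $b\equiv n\bmod p^k$, so it exists precisely when $n\bmod p^k$ is a quadratic residue modulo $p^k$; since $p\nmid n$, the structure of $(\mathbb{Z}/p^k)^{\times}$ (equivalently, Hensel lifting) makes this equivalent to $\left(\tfrac np\right)=1$, so there are exactly $(1+(\tfrac np))/2$ of them. Symmetrically there are $(1+(\tfrac{-n}{p}))/2$ degenerate targets with $b=0$, and none has $a=b=0$ because that would force $p^k\mid n$. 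Hence $T(n,p^k)$ contains exactly $s_p(n)$ degenerate targets --- this is the very definition of $s_p$ --- and therefore $\tau(n,p^k)-s_p(n)$ regular ones.

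Next I would set up the fibration given by reduction modulo $p^k$. Every target modulo $p^{k+1}$ reduces to a target modulo $p^k$ --- both the quadratic-residue condition and the congruence $n+a\equiv b$ survive reduction --- distinct level-$k$ targets have disjoint fibers, and no level-$(k+1)$ target reduces to a configuration with $a\equiv b\equiv 0\bmod p^k$; so the fibers over the regular and over the degenerate level-$k$ targets form a partition of $T(n,p^{k+1})$, whence
\[\tau(n,p^{k+1})=\sum_{T\in T(n,p^k)}\#\{\text{lifts of }T\}.\]
Now I would apply Theorem~\ref{teo:tau:pk}: each regular target has exactly $p$ lifts, while the number $L_k$ of lifts of a degenerate target depends only on the parity of $k$, the parity entering through whether $p^k$ is a square modulo $p^{k+1}$. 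For $k$ odd, $p^k$ is a non-square, so $tp^k$ is a residue only for $t=0$ and $L_k=1$; for $k$ even, $p^k=(p^{k/2})^2$, so $tp^k$ is a residue modulo $p^{k+1}$ for $t=0$ and for every nonzero residue $t$ modulo $p$, which is the value of $L_k$ recorded in Theorem~\ref{teo:tau:pk}. Summing the fiber sizes gives
\[\tau(n,p^{k+1})=\big[\tau(n,p^k)-s_p(n)\big]\,p+s_p(n)\,L_k,\]
and substituting the two values of $L_k$ yields the asserted formula in each parity case.

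The step I expect to require the most care is the degenerate fiber count for even $k$: that is the genuinely parity-sensitive input, resting on $p^k$ being a perfect square modulo $p^{k+1}$, and it is exactly what Theorem~\ref{teo:tau:pk} packages. The other point to verify cleanly is that reduction modulo $p^k$ maps onto the set of level-$k$ targets and avoids the impossible doubly-degenerate configuration, so that the fiber sum runs over a genuine partition; the hypothesis $k>1$ is what keeps this application of Theorem~\ref{teo:tau:pk} clear of the separate prime-modulus count handled by Theorem~\ref{teo:tau}.
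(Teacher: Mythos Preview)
Your proposal is correct and is exactly the argument the paper has in mind: the paper's own proof is the single line ``Follows immediately from Theorem~\ref{teo:tau:pk},'' and what you have written is a careful unpacking of that sentence---partitioning $T(n,p^k)$ into the $\tau(n,p^k)-s_p(n)$ regular targets and the $s_p(n)$ degenerate ones, then summing the fiber sizes supplied by Theorem~\ref{teo:tau:pk}. One small remark: Theorem~\ref{teo:tau:pk} is stated for $k>0$, so your fibration argument in fact goes through already for $k=1$; the hypothesis $k>1$ is not what separates this from Theorem~\ref{teo:tau}.
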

\begin{proof}
 Follows immediately from Theorem \ref{teo:tau:pk}
\end{proof}
From the theorems \ref{teo:tau}, \ref{teo:tau:pk:bis} and Remark \ref{rem:taumulti} the number of targets for $n$ modulus an odd composite $c$ whose prime factors are known can be easily calculated.

\section{Relation between targets and $\mathcal{D}_{n,c}$}
\label{sec:tauO}
As the number of solutions to $n+x^2\equiv y^2\mod p$ and the number of points in $\mathcal{H}_{n,p}$ are the same, it can be easily seen that the targets for $n$ modulus $p$ and the set of distances between points of a modular hyperbola $\mathcal{H}_{n,p}$ are related.
\begin{theorem}
\label{teo:biyec}
 Let $p$ be an odd prime with $\gcd(n,p)=1$\deleted{, $(\frac{n}{p})=-1$ and $(\frac{-n}{p})=-1$}.\replaced{There}{ Then, there} is a correspondence between the elements of $T_{n,p}$ and $\mathcal{D}_{n,p}$.
\end{theorem}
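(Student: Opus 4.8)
The plan is to write down an explicit map $\Phi\colon\mathcal{D}_{n,p}\to T(n,p)$ between the distance set and the set $T(n,p)$ of targets for $n$ modulo $p$, and to prove it is a bijection. For $u\in\mathcal{D}_{n,p}$ I set $\Phi(u)=(a,b,p)$, where $a$ is the residue of $4^{-1}u^{2}$ and $b$ the residue of $4^{-1}u^{2}+n$ modulo $p$, with $4^{-1}$ the inverse of $4$ in $\mathbb{Z}_{p}$. This is the algebraic counterpart of the canonical form: a point $(x,y)$ on $\mathcal{H}_{n,p}$ satisfies $(x-y)^{2}+4n\equiv (x+y)^{2}\bmod p$, so the distance $u=|x-y|$ determines the pair of squares $4^{-1}(x-y)^{2}\equiv a$ and $4^{-1}(x+y)^{2}\equiv b$, which are exactly the entries of a target.

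First I would verify that $\Phi$ lands in $T(n,p)$. Choosing any $(x,y)\in\mathcal{H}_{n,p}$ with $|x-y|=u$, one has $a\equiv (2^{-1}(x-y))^{2}$, a square; using $xy\equiv n$ one gets $4a+4n\equiv (x-y)^{2}+4xy=(x+y)^{2}$, so $b\equiv (2^{-1}(x+y))^{2}$ is a square too, while $n+a\equiv b$ holds by construction. The value $\Phi(u)$ depends only on $u$, not on the chosen point.

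Next I would prove surjectivity directly. Given a target $(a,b,p)$, pick $\alpha,\beta\in\mathbb{Z}_{p}$ with $\alpha^{2}\equiv a$ and $\beta^{2}\equiv b$, which exist by Definition \ref{def:target}. The point $(x,y)=\big((\alpha+\beta)\bmod p,\ (\beta-\alpha)\bmod p\big)$ has $xy\equiv\beta^{2}-\alpha^{2}\equiv b-a\equiv n$, hence lies on $\mathcal{H}_{n,p}$; its distance $u=|x-y|$ satisfies $u\equiv\pm 2\alpha\bmod p$, so $u^{2}\equiv 4a$ and therefore $\Phi(u)=(a,b,p)$ since $a,b$ already lie in $\{0,\dots,p-1\}$. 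Because Theorems \ref{teo:TD} and \ref{teo:tau} give $|\mathcal{D}_{n,p}|=|T(n,p)|=\tau(n,p)$, a surjection between these equinumerous finite sets is a bijection, which finishes the proof.

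Along this route there is no real obstacle, since the counting has already been done. The one genuinely delicate point arises only if one insists on a self-contained argument that replaces the appeal to Theorem \ref{teo:TD} by a direct proof of injectivity of $\Phi$. Since $\Phi(u)=\Phi(u')$ forces $u^{2}\equiv u'^{2}\bmod p$ with $u,u'\in\{0,\dots,p-1\}$, the only obstruction is $u'=p-u$ with $u\neq 0$, i.e.\ the possibility that $u$ and $p-u$ are simultaneously distances. Realizing them by points $(y+u,y)$ and $(y'+(p-u),y')$ of $\mathcal{H}_{n,p}$ with the first coordinates reduced into $[0,p-1]$, the hyperbola relations read $y^{2}+uy\equiv n\equiv (y')^{2}-uy'$, whence $(y+y')(y-y'+u)\equiv 0\bmod p$. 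The inequalities $1\le y\le p-1-u$ and $1\le y'\le u-1$ (the lower bounds because $p\nmid n$) confine each factor to the interval $[2,p-2]$, so neither can vanish mod $p$ — a contradiction. Keeping track of which integer lies in which interval is the part that needs care; the rest is elementary, character-free algebra.
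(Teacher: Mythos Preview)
Your argument is correct. The map $\Phi(u)=(4^{-1}u^{2}\bmod p,\ 4^{-1}u^{2}+n\bmod p,\ p)$ is precisely the paper's map $\gamma_{1}$ precomposed with the bijection $\bar\psi$ of Lemma~\ref{lemma:RD}, so at the level of the underlying correspondence you and the paper agree. Where you differ is in how bijectivity is established. The paper routes everything through the geometric region $\bar R_{1}$: it uses Lemma~\ref{lemma:RD} to identify $\mathcal{D}_{n,p}$ with $\bar R_{1}$ and then writes down an explicit inverse $\gamma_{2}$. You instead bypass $\bar R_{1}$ entirely, prove surjectivity of $\Phi$ by exhibiting a preimage, and conclude by the equality of cardinalities from Theorems~\ref{teo:TD} and~\ref{teo:tau}. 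This is shorter and cleaner, at the cost of relying on two results that were proved with character sums.

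Your optional direct injectivity argument is a genuine addition: it is elementary and character-free, and it makes the bijection self-contained (no appeal to Theorem~\ref{teo:TD}). The interval bookkeeping is right: realizing $u$ and $p-u$ as distances forces $y\in[1,p-1-u]$ and $y'\in[1,u-1]$ (the lower bounds from $p\nmid n$, the upper bounds from keeping both coordinates in $[0,p-1]$), whence both factors $y+y'$ and $y-y'+u$ lie in $[2,p-2]$ and cannot vanish modulo $p$. The edge cases $u\in\{1,p-1\}$ are handled automatically since one of the intervals is then empty. Neither the paper's proof nor its supporting Lemma~\ref{lemma:RD} contains this argument, so it is worth keeping.
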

\begin{proof}
\deleted{For simplicity, instead of \replaced{$\bar{R}_1$}{$R_1$} consider.} \deleted{As $\gcd(n,p)=1$, $(\frac{n}{p})=-1$ and $(\frac{-n}{p})=-1$, there are no targets of the form $(0,b,p)$ or $(a,0,p)$ and there are no points of $\mathcal{H}_{n,p}$ in the boundaries of $R_1$.}
 
There is indeed a correspondence between the elements of $T_{n,p}$ and the elements of \replaced{$\bar{R}_1$}{$R_4$} as it can be seen by the mapping \deleted{from $R_4$ to $T_{n,p}$} $\gamma_1:\bar{R}_1\rightarrow T_{n,p}$, given by
 \[(x,y)\mapsto (2^{-2}(x-y)^2,2^{-2}(x+y)^2,c)=(a,b,c)\]
 and the mapping \deleted{from $T_{n,p}$ to $R_4$} $\gamma_2:T_{n,p}\rightarrow \bar{R}_1$ , given by
 \[(a,b,c)\mapsto (x,y)\] such that \replaced{$A_u \cap \bar{R}_1=\{(x,y)\}$}{$A_u \cap R_1=\{(x,y)\}$} with $u=2\alpha$, $\alpha^2\equiv a\mod p$ and $\alpha<p/2$ (by the proof of Lemma \ref{lemma:RD}, this map is well-defined). From Lemma \ref{lemma:RD}, the result follows. 
\end{proof}

Besides their algorithmic implications, theorems \ref{teo:tausobrec} and \ref{teo:tausobrec2} provide bounds that show the asymptotic behavior of $\tau$ with respect to $c$ for certain composite moduli. 

In what follows, $p$ stands for an odd prime. The symbols $O$ and $\Theta$ are used, as customary, to describe asymptotic bounds. A simple result (following from one of Mertens' Theorems; see Corollary 2.10 in \cite{hit} and Theorem 5.13 in \cite{shoup}) and that will become useful later is 
$\prod_{p\leq B} \frac{p+1}{p}=\Theta(\log{B})$. As usual, $\pi$ is used for the prime-counting function, so $\pi(B)$ is the number of primes up to $B$.
\begin{theorem}
If $c=\prod_{p\leq B} p$ and for all $p\leq B$ with $p\equiv1 \mod 4$ holds $\big(\frac{n}{p}\big)=-1$ then  \[\frac{\tau(n,c)}{c}=O(4^{-\pi(B)}\log{B})\]
 \label{teo:tausobrec}
\end{theorem}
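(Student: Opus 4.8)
The plan is to combine the multiplicativity of $\tau$ with the evaluation of $\tau(n,p)$ at primes supplied by Theorem~\ref{teo:tau}, and then to invoke the quoted Mertens-type estimate. First I would use Remark~\ref{rem:taumulti}: since the primes dividing $c$ are pairwise coprime, $\tau(n,c)=\prod_{p\leq B}\tau(n,p)$. The prime $2$ lies outside the scope of Theorem~\ref{teo:tau}, but a direct count of the targets $(a,b,2)$ gives $\tau(n,2)=2$, a bounded factor that will be absorbed into the implied constant; I assume throughout, as is natural here, that $\gcd(n,c)=1$ (equivalently, no prime up to $B$ divides $n$), so that Theorem~\ref{teo:tau} applies at every odd prime factor of $c$. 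Note that the hypothesis $\big(\frac{n}{p}\big)=-1$ for $p\equiv 1\bmod 4$ already forces $p\nmid n$ for those primes.

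Next I would bound each odd-prime factor. By Theorem~\ref{teo:tau}, for $p\equiv 3\bmod 4$ one has $\tau(n,p)=\frac{p-3}{4}+1=\frac{p+1}{4}$, while for $p\equiv 1\bmod 4$ the hypothesis $\big(\frac{n}{p}\big)=-1$ makes the correction term $(1+(\frac{n}{p}))/2$ vanish, so $\tau(n,p)=\frac{p-1}{4}$. In every case $\tau(n,p)\leq\frac{p+1}{4}$, hence, writing $\pi(B)-1$ for the number of odd primes up to $B$,
\[
\tau(n,c)\;\leq\;\tau(n,2)\prod_{2<p\leq B}\frac{p+1}{4}\;=\;2\cdot 4^{-(\pi(B)-1)}\prod_{2<p\leq B}(p+1).
\]

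Then I would divide by $c=2\prod_{2<p\leq B}p$, cancelling $\prod_{2<p\leq B}p$ against the numerator, to obtain
\[
\frac{\tau(n,c)}{c}\;\leq\;4^{-(\pi(B)-1)}\prod_{2<p\leq B}\frac{p+1}{p}\;\leq\;4^{-(\pi(B)-1)}\prod_{p\leq B}\frac{p+1}{p}.
\]
Finally, applying $\prod_{p\leq B}\frac{p+1}{p}=\Theta(\log B)$ (the consequence of Mertens' theorem recalled just before the statement) and absorbing the factor $4$ coming from the exponent shift into the big-$O$, the right-hand side becomes $O(4^{-\pi(B)}\log B)$, as claimed.

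This argument is essentially bookkeeping and I do not expect a genuine obstacle. The only points that require a little care are the separate treatment of the prime $2$, the observation that $\big(\frac{n}{p}\big)=-1$ is exactly what is needed to suppress the $+1$ in the $p\equiv 1\bmod 4$ case (so that $\frac{p+1}{4}$ really is a uniform upper bound over all odd $p\mid c$), and keeping track of the off-by-one in the exponent of $4$ induced by whether $\pi(B)$ counts the prime $2$.
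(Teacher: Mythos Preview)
Your argument is correct and follows the same route as the paper: multiplicativity of $\tau$, the uniform bound $\tau(n,p)\le (p+1)/4$ at odd primes coming from Theorem~\ref{teo:tau} under the stated hypothesis, and then the Mertens-type estimate $\prod_{p\le B}\frac{p+1}{p}=\Theta(\log B)$. You are in fact slightly more careful than the paper, which applies $\tau(n,p)\le (p+1)/4$ for all $p\le B$ without isolating $p=2$; your explicit computation $\tau(n,2)=2$ and the resulting harmless off-by-one in the exponent of $4$ make the bookkeeping airtight.
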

\begin{proof}
 By Remark \ref{rem:taumulti}, 
 \[\tau(n,c)=\prod_{p\leq B} \tau(n,p)\]
and from the definition of $\tau$ and the assumptions of the above Theorem, it follows that $\tau(n,p)\leq (p+1)/4$ for all $p\leq B$. Therefore, 
\[\tau(n,c)=\prod_{p\leq B} \tau(n,p)\leq 4^{-\pi(B)}\prod_{p\leq B} p+1.\]
Dividing by $c$ and applying $\prod_{p\leq B} \frac{p+1}{p}=\Theta(\log{B})$ the desired result immediately follows.
\end{proof}

\begin{theorem}
 If $c=\prod_{p\leq B} p$ then \[\frac{1}{c}\prod_{2<p\leq B}\Big(\tau(n,p)-\replaced{(1+\big(\frac{n}{p}\big))/2}{\big(\frac{n}{p}\big)}\Big)=O(4^{-\pi(B)}\log{B})\]
 \label{teo:tausobrec2}
\end{theorem}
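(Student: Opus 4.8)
The plan is to follow the template of the proof of Theorem~\ref{teo:tausobrec}. The key observation is that subtracting $(1+\big(\frac{n}{p}\big))/2$ cancels precisely the extra $+1$ that appears in Theorem~\ref{teo:tau} for primes $p\equiv 1\bmod 4$ with $\big(\frac{n}{p}\big)=1$, so no hypothesis on those primes is needed here (we only assume, as throughout this section, that $\gcd(n,p)=1$ for every $p\le B$, so that Theorem~\ref{teo:tau} applies to each factor). First I would use Theorem~\ref{teo:tau} to evaluate $\tau(n,p)-(1+\big(\frac{n}{p}\big))/2$: it equals $\frac{p-1}{4}$ when $p\equiv 1\bmod 4$, and $\frac{p+1}{4}-(1+\big(\frac{n}{p}\big))/2$ when $p\equiv 3\bmod 4$. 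Since $(1+\big(\frac{n}{p}\big))/2\in\{0,1\}$, in either case this quantity is nonnegative and at most $\frac{p+1}{4}$.

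Because every factor is nonnegative, the product over the odd primes up to $B$ obeys
\[
\prod_{2<p\le B}\Big(\tau(n,p)-\frac{1+\big(\frac{n}{p}\big)}{2}\Big)\le\prod_{2<p\le B}\frac{p+1}{4}=4^{-(\pi(B)-1)}\prod_{2<p\le B}(p+1),
\]
using that there are $\pi(B)-1$ odd primes not exceeding $B$. Dividing by $c=2\prod_{2<p\le B}p$ yields the upper bound $\tfrac12\,4^{-(\pi(B)-1)}\prod_{2<p\le B}\frac{p+1}{p}$ for the left-hand side of the statement.

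Finally I would write $\prod_{2<p\le B}\frac{p+1}{p}=\tfrac23\prod_{p\le B}\frac{p+1}{p}$ and invoke the estimate $\prod_{p\le B}\frac{p+1}{p}=\Theta(\log B)$ recalled in the text, so the bound becomes $\tfrac13\,4^{-\pi(B)+1}\,\Theta(\log B)=O(4^{-\pi(B)}\log B)$; since the left-hand side is nonnegative, this proves the claim. I do not expect any genuine difficulty. The only points requiring a little care are the bookkeeping of the $p=2$ factor (the product runs over odd primes only, whereas $c$ contains $2$) and the harmless fact that a factor can vanish — for instance $\tau(n,3)-(1+\big(\frac{n}{3}\big))/2=0$ when $\big(\frac{n}{3}\big)=1$ — which only makes the product smaller and is still within the asserted bound.
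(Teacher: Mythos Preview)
Your proof is correct and follows exactly the approach the paper takes: the paper's entire proof is the one-line observation that $\tau(n,p)-(1+(\tfrac{n}{p}))/2\le (p+1)/4$, after which it declares the argument identical to that of Theorem~\ref{teo:tausobrec}. Your explicit case split via Theorem~\ref{teo:tau} and your bookkeeping of the prime $2$ merely spell out details the paper leaves implicit (note that just before these theorems the paper stipulates ``$p$ stands for an odd prime'', so $c$ is already odd and your extra factor of $2$ is unnecessary, but harmless inside the $O$).
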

\begin{proof}
  Note that $\tau(n,p)-\replaced{(1+\big(\frac{n}{p}\big))/2}{\big(\frac{n}{p}\big)}\leq (p+1)/4$. The proof is identical as the one given above.
\end{proof}

\section{An application to factoring}
\label{sec:fac}
We now want to briefly sketch an algorithm for integer factorization using targets. The complexity analysis of the algorithm is extense and will be presented in a forthcoming paper; hence we exhibit here the basic idea. Although the algorithm is not of practical use nowadays, given that its running time is exponential (roughly $O(n^{1/3})$), there are many subexponential algorithms that were born from previous exponential ones, and besides that, it shows one possible way for using the elements in $\mathcal{D}_{n,c}$ to factor $n$. 

Let $n=pq$ be composite with $p,q$ primes with $|p-q|$ not too big, so that if $x^*,y^*$ are solutions to $n+x^2=y^2$ then $|x^*|<n^{1/2}$. 

As it was suggested by theorems \ref{teo:tausobrec} and \ref{teo:tausobrec2}, the idea is to let $c'\cdot c=p_1\ldots p_m$ be the product of the first consecutive odd primes (with $c'=p_1\ldots p_r$, $0<r<m$) and observe that, for some pair of targets $(a,b,c)$ and $(a',b',c')$ it holds that
\[{x^*}^2=a+tc=a'+uc'\]
for integers $t,u$. Clearly then, if $(a,b,c)$ is a right target for $n$ for some $\alpha$ square root of $a$ modulus $c$, then there is an integer $z$ bounded by $|z|<n^{1/2}/c$ such that
\[{x^*}^2=(\alpha+cz)^2.\]
But also note that $(\alpha+cz)^2\equiv a'\mod c'$, hence if $z_1,\ldots,z_l$ are the solutions of  $(\alpha+cz)^2\equiv a'\mod c'$ with $0\leq z_i\leq c'$, in which case 
\[{x^*}^2=(\alpha+cz_i+c\cdot c'\cdot k)^2\]
for some $i=1,\ldots,l$, with $k$ an integer bounded by $|k|\leq \lceil n^{1/2}/(c' \cdot c)\rceil$. The search is conducted over $k$, and as $m$ grows the bound gets narrowed. An appropriate choice is to take $r=\lfloor m/2 \rfloor$ and $c\cdot c'$ as the maximum product of consecutive odd primes that satisfy
\[n^{1/2}/(c' \cdot c)\geq 1\]

The complete procedure is detailed as Algorithm \ref{algo:factortargets}.
 \begin{algorithm}[H]
 
 \caption{Factoring integers using targets}\label{algo:factortargets} 
 \begin{algorithmic}[1]
\REQUIRE $n$, a composite to factor. $m$ a positive integer such that $p_1\ldots p_m$ is the maximum product of odd primes with $n^{1/2}/(p_1\ldots p_m)\geq 1$.
  \STATE Choose $0<r<m$, let $c'\leftarrow p_1\ldots p_r$, $c\leftarrow p_{r+1}\ldots p_m$.
  \STATE $z_{max}\leftarrow \lceil n^{1/2}/c \rceil$
  \STATE $k_{max}\leftarrow \lceil n^{1/2}/(c\cdot c') \rceil$
  \STATE $k_{min}\leftarrow -k_{max}$.
  \FOR{every target $(a_i,b_i,c)$} \label{algo2:step:1} 
  \STATE Let $\{\alpha_{i,1},\ldots,\alpha_{i,k_i}\}$ be the square roots of $a_i$ modulus $c$.\label{algo2:step:2} 
  \STATE $d\leftarrow (c^{-1} \mod c')$.
  \FOR{$j=1,\ldots,k_i$}\label{algo2:step:3}
  \STATE $t_{i,j}\leftarrow d(\alpha_{i,1}-\alpha_{i,j})\mod c'$\COMMENT{Precalculations} \label{algo2:step:4}
  \STATE $\delta_{i,j}\leftarrow \alpha_{i,j}+ct_{i,j}$\label{algo2:step:5}
  \ENDFOR \label{algo2:step:6}
  \STATE $\theta_i\leftarrow d(\alpha_{i,1}-\alpha_{1,1})\mod c'$\label{algo2:step:7}
  \ENDFOR\label{algo2:step:8}
  \STATE Let $P(z)=(\alpha_{1,1}-cz)^2$.\label{algo2:step:9}
  \STATE $i\leftarrow 0$  \label{algo2:step:10}
  \FOR{$\ell=1,\ldots,\tau(n,c')$}\label{algo2:step:11}
  \STATE Take $(a'_{\ell},b'_{\ell},c')$, the target number $\ell$ of $n$ modulus $c'$\label{algo2:step:12}
  \FOR{$z=0,\ldots,c'-1$}\label{algo2:step:13}
  \IF{$(P(z)-a_{\ell}')\mod c'=0$}\label{algo2:step:14}
  \STATE $z_i\leftarrow z$, $i\leftarrow i+1$ \COMMENT{Initial \replaced{points}{dots}}\label{algo2:step:15}
  \ENDIF\label{algo2:step:16}
  \ENDFOR\label{algo2:step:17}
  \STATE $u\leftarrow i-1$ \label{algo2:step:18}
  \ENDFOR \label{algo2:step:19}
  \FOR{$s=1,\ldots,\tau(n,c)$}\label{algo2:step:20}
  \FOR{$t=1,\ldots,k_i$}\label{algo2:step:21}
  \STATE Define $x_{i,j}=\delta_{s,t}+c(\theta_s+c'\cdot j+z_i)$\COMMENT{Search for a solution}\label{algo2:step:22}
  \STATE $f\leftarrow \prod_{i=0}^u\prod_{j=k_{min}}^{k_{max}} \lfloor (x_{i,j}+n)^{1/2}\rfloor-x_{i,j} \mod n$\label{algo2:step:23}
  \STATE $g\leftarrow |\gcd(f,n)|$\label{algo2:step:24}
  \IF{$1<g<n$}\label{algo2:step:25}
  \RETURN $(g,n/g)$\label{algo2:step:26}
  \ENDIF\label{algo2:step:27}
  \ENDFOR\label{algo2:step:28}
  \ENDFOR\label{algo2:step:29}
 \end{algorithmic}

 \end{algorithm}
The algorithm was implemented in Python. After conducting several tests with small numbers
 (up to $30$ digits) it became evident that the number of iterations is near $O(\log{p_m}n^{1/3})$ (all the operations can be bounded by the cost of multiplying integers of size at most $\log{n}$ and taking the integer square root in line \ref{algo2:step:23}).

\end{document}